\newtheorem{theorem}{Theorem}[section]
\newtheorem{lemma}[theorem]{Lemma}
\numberwithin{equation}{section}
\theoremstyle{definition}
\newtheorem{remark}[theorem]{Remark}
 \def\Rset{\mathbb{R}}
\def\leq{\leqslant }
\def\geq{\geqslant}
\newcommand{\SL}{{\mathrm{SL}}}
\newcommand{\Teichmuller}{{Teich\-m\"uller }}
\begin{document}
\title[An origami of genus $3$ with arithmetic KZ monodromy]{An origami of genus $3$ with arithmetic Kontsevich--Zorich monodromy}

\author{Pascal Hubert}
\address{Pascal Hubert: Aix Marseille Universit\'e, CNRS, Centrale Marseille, Institut de Math\'ematiques de Marseille, I2M - UMR 7373, 13453 Marseille, France.}
\email{hubert.pascal@gmail.com.}

\author{Carlos Matheus}
\address{Carlos Matheus: Centre de Math\'ematiques Laurent Schwartz, CNRS (UMR 7640), \'Ecole Polytechnique, 91128 Palaiseau, France.}
\email{carlos.matheus@math.cnrs.fr}
\urladdr{http://carlos.matheus.perso.math.cnrs.fr/}

\date{\today}

\begin{abstract}
In this note, we exploit the arithmeticity criterion of Oh and Benoist--Miquel to exhibit an origami in the principal stratum of the moduli space of translation surfaces of genus three whose Kontsevich--Zorich monodromy is not thin in the sense of Sarnak. 
\end{abstract}
\maketitle

\setcounter{tocdepth}{1}
\tableofcontents

\section{Introduction}\label{s.introduction}

The dynamics of the action of $SL(2,\mathbb{R})$ on moduli spaces of translation surfaces is driven by the Kontsevich--Zorich monodromy consisting of the matrices encoding changes of basis in absolute homology of translation surfaces along $SL(2,\mathbb{R})$-orbits. 

The nature of the Kontsevich--Zorich monodromy depends heavily on the support of the ergodic $SL(2,\mathbb{R})$-invariant probability measure, and Sarnak asked how often a Kontsevich--Zorich monodromy is arithmetic or thin\footnote{Recall that a subgroup $\Gamma\subset GL_n(\mathbb{Z})$ with Zariski closure $G$ is called arithmetic, resp. thin, when the index of $\Gamma$ in the subgroup $G(\mathbb{Z})$ (of integral points of $G$) is finite, resp. infinite.} in his sense (compare with \S 3.2 of \cite{Sa}). 

In the case of Masur--Veech measures (of connected components of the strata of moduli spaces of translation surfaces), the corresponding Kontsevich--Zorich monodromies contain the Rauzy--Veech groups\footnote{Coming from a combinatorial process called Rauzy--Veech algorithm.}, and, as it turns out, the arithmeticity of Rauzy--Veech groups was recently established in \cite{AMY} and \cite{Gu}. In particular, the Kontsevich--Zorich monodromies associated to Masur--Veech measures are always arithmetic. 

In this article, we focus on the Kontsevich--Zorich monodromies of the natural measures supported on Teichm\"uller curves\footnote{Closed $SL(2,\mathbb{R})$-orbits in moduli spaces of translation surfaces.}. Any Teichm\"uller curve is known to be defined over a totally real number field, and we say that a Teichm\"uller curve is arithmetic if and only if it is defined over $\mathbb{Q}$. Equivalently, a Teichm\"uller curve is arithmetic if and only if it contains an origami / square-tiled surface\footnote{I.e., a translation surface obtained from a finite collection of squares of fixed sizes by gluing by translations pairs of parallel sides.}. 

In the moduli space of translation surfaces of genus $2$, it can be shown that the Kontsevich--Zorich monodromy of non-arithmetic, resp. arithmetic, Teichm\"uller curves are thin, resp. arithmetic (cf. \cite[\S3.2]{Sa}).\footnote{On the other hand, to the best of our knowledge, it seems that there were no available results concerning the arithmeticity or thinness of the Kontsevich--Zorich monodromy of Teichm\"uller curves in moduli spaces of translation surfaces of genus $g\geq 3$.}  

In the moduli space of translation surfaces of genus $3$, the main theorem of this note ensures the existence of arithmetic Kontsevich--Zorich monodromies associated to an arithmetic Teichm\"uller curves.

\begin{theorem}\label{t.HM} The non-tautological part\footnote{Here, the non-tautological part of the Kontsevich--Zorich monodromy of an origami $X$ means the following. The absolute homology of an origami $X$ admits a decomposition defined over $\mathbb{Z}$ into the direct sum of a tautological plane $H_1^{st}(X)$ and its symplectic orthogonal $H_1^{(0)}(X)$ (with respect to the intersection form). The Kontsevich--Zorich monodromy respects this decomposition and the non-tautological part of the Kontsevich--Zorich monodromy is its restriction to $H_1^{(0)}$. In particular, the non-tautological part of the Kontsevich--Zorich monodromy is a subgroup of $Sp(H_1^{(0)}(X))\simeq Sp(2g-2,\mathbb{Z})$, where $g$ is the genus of $X$.} of the Kontsevich--Zorich monodromy associated to a certain Teichm\"uller curve $\mathcal{C}$ generated by a certain origami $\mathcal{O}_1$ of genus $3$ is arithmetic. 
\end{theorem}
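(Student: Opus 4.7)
The plan is to apply the arithmeticity criterion of Oh and Benoist--Miquel mentioned in the abstract, which asserts that a discrete Zariski dense subgroup of a higher rank semisimple Lie group containing a non-trivial unipotent element must be an arithmetic lattice. Since $Sp(4,\mathbb{R})$ has real rank $2$, and the non-tautological Kontsevich--Zorich monodromy $\Gamma$ of a genus $3$ origami lives in $Sp(H_1^{(0)})\simeq Sp(4,\mathbb{Z})$, the theorem reduces to four tasks: (i) producing an explicit origami $\mathcal{O}_1$ in the principal stratum $\mathcal{H}(1,1,1,1)$; (ii) computing enough generators of the Veech group of $\mathcal{O}_1$ together with their actions on $H_1^{(0)}(\mathcal{O}_1)\simeq\mathbb{Z}^4$; (iii) certifying that the resulting subgroup $\Gamma\leq Sp(4,\mathbb{Z})$ is Zariski dense; and (iv) exhibiting a non-trivial unipotent element in $\Gamma$.

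First I would select an origami $\mathcal{O}_1$ with as few squares as possible whose four conical singularities all have total angle $4\pi$ (so that $\mathcal{O}_1\in\mathcal{H}(1,1,1,1)$), chosen so that its horizontal and vertical cylinder decompositions have easily computable moduli. The Veech group $\Gamma(\mathcal{O}_1)\leq SL(2,\mathbb{Z})$ can then be determined algorithmically, e.g.\ via Schmith\"usen's method, and its generators lifted to affine homeomorphisms of $\mathcal{O}_1$. A basis of $H_1^{(0)}(\mathcal{O}_1)$ is obtained by projecting a basis of $H_1(\mathcal{O}_1,\mathbb{Z})$ symplectically onto the kernel of the holonomy map $\mathcal{O}_1\to\mathbb{T}^2$, and the action of each generator on this basis is extracted by tracking how the affine homeomorphism permutes and shears the tiling squares. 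This yields an explicit list of matrices in $Sp(4,\mathbb{Z})$ generating $\Gamma$.

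The main obstacle will be step (iii), certifying Zariski density of $\Gamma$ inside $Sp(4)$. Two strategies should be attempted. The first is to exhibit two elements of $\Gamma$ whose characteristic polynomials have Galois group the full Weyl group of $Sp(4)$, forcing the Zariski closure to be reductive of full rank and acting irreducibly, and then to rule out the short list of proper irreducibly acting reductive subgroups of $Sp(4)$ by producing further generators incompatible with each possibility. The second, and typically more tractable, is to use strong approximation: reduce the generators of $\Gamma$ modulo a well-chosen prime $p$ and verify that their images already generate the finite group $Sp(4,\mathbb{F}_p)$; by the theorem of Matthews--Vaserstein--Weisfeiler, surjectivity for a single such $p$ forces Zariski density of $\Gamma$ in $Sp(4)$ over characteristic zero.

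Step (iv) is essentially free of charge: any parabolic affine homeomorphism of $\mathcal{O}_1$, for instance a Dehn multitwist along a horizontal or vertical cylinder decomposition, acts unipotently on absolute homology and hence on $H_1^{(0)}(\mathcal{O}_1)$, and such a multitwist always belongs to $\Gamma(\mathcal{O}_1)$ because the moduli of an origami are automatically commensurable. Combining the Zariski density produced in step (iii) with such a unipotent element, the Oh / Benoist--Miquel criterion immediately yields that $\Gamma$ is an arithmetic subgroup of $Sp(4,\mathbb{Z})$, proving Theorem~\ref{t.HM}.
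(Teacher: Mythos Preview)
Your plan has the right architecture, but step~(iv) rests on a misstatement of the Oh / Benoist--Miquel criterion, and this is a genuine gap. The criterion does \emph{not} say that a Zariski dense discrete subgroup of a higher rank group containing one non-trivial unipotent element is arithmetic; that statement is false (the thin hypergeometric monodromy groups in $Sp(4,\mathbb{Z})$ produced by Brav--Thomas are Zariski dense and contain unipotents). What Oh and Benoist--Miquel actually require is that the subgroup contain a \emph{lattice in a horospherical subgroup}, i.e.\ a finite-index subgroup of $U(\mathbb{Z})$ where $U$ is the unipotent radical of some proper parabolic of $Sp(4)$. In $Sp(4)$ every such $U$ has dimension at least~$3$, so a single Dehn multitwist, which contributes at best a rank-one unipotent, is nowhere near sufficient. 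The paper therefore has to work much harder at this step: after a change of basis it searches for specific words $x,y,z$ in the generators and checks that $[y,x]$, $x^6[y,x]$, $y^6[y,x]^{-1}$, $z^6(x^6[y,x])^{-1}$ generate the four positive root groups of $Sp(4)$, hence a finite-index subgroup of the full unipotent upper-triangular group $U(\mathbb{Z})$. Only then does the criterion apply.

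On step~(iii), your first strategy is essentially what the paper does: it produces two Galois-pinching elements with disjoint splitting fields and invokes the Prasad--Rapinchuk criterion. Your second strategy would also work, though note that the direction you want (surjectivity mod one prime $\Rightarrow$ Zariski density) is the elementary one and does not need Matthews--Vaserstein--Weisfeiler; MVW gives the converse implication. Finally, be aware that the paper's Veech group turns out to be generated by two \emph{elliptic} elements of order~$3$ (not parabolics), and the origami $\mathcal{O}_1$ was in fact chosen precisely to avoid directions whose multitwists act trivially on $H_1^{(0)}$; so your intuition that parabolic directions automatically supply the needed unipotent input is doubly optimistic here.
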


\begin{remark} It would be interesting to know whether the ``majority'' of non-tautological parts of Kontsevich--Zorich monodromies of origamis of genus $3$ is arithmetic: for instance, is it true that the Kontsevich--Zorich monodromies of all but finitely many origamis in the minimal stratum $\mathcal{H}(4)$ of the moduli space of translation surfaces of genus $3$ are arithmetic? 
\end{remark}

Closing this short introduction, let us describe the organization of this note. In Section \ref{s.example}, we describe the origami $\mathcal{O}_1$ and its Teichm\"uller curve $\mathcal{C}$. In Section \ref{sec:KZmonodromy}, we compute the Kontsevich--Zorich monodromy of $\mathcal{C}$: in particular, we describe two $4\times 4$ matrices (called $\rho(a)$ and $\rho(b)$ below) generating the non-tautological part of the Kontsevich--Zorich monodromy of $\mathcal{C}$. Finally, we rephrase Theorem \ref{t.HM} as Theorem \ref{t.HM'} below (for the sake of convenience), and we show that the desired arithmeticity statement can be deduced from a recent theorem of Benoist--Miquel \cite{BM} after some computations with certain powers of the two $4\times 4$ matrices introduced above. 

\begin{remark} In this note, we assume some familiarity with the basic features of origamis. In particular, the reader is invited to consult \S8 and Appendix C of the survey \cite{FM} for more details about the representation of origamis via permutations, the Veech and affine groups of origamis, etc. 
\end{remark}

\subsection*{Acknowledgements}

We are thankful to Alex Eskin and Vincent Delecroix for some discussions related to this note. 

\section{An arithmetic Teichm\"uller curve $\mathcal{C}$ with a single cusp} \label{s.example}

\subsection{The origami $\mathcal{O}_1$} Consider the square-tiled surface $\mathcal{O}_1$ associated to the pair of permutations 
$$h_{\mathcal{O}_1} = (1) (2,3,4,5)(6,7,8,9), \quad v_{\mathcal{O}_1} = (1,2,3,6)(4,7,9,8)(5)$$

The commutator $[h_{\mathcal{O}_1}, v_{\mathcal{O}_1}]:=v_{\mathcal{O}_1} h_{\mathcal{O}_1} v_{\mathcal{O}_1}^{-1} h_{\mathcal{O}_1}^{-1}$ is 
$$[h_{\mathcal{O}_1}, v_{\mathcal{O}_1}]=(1,9)(2,3)(4,6)(5,8)(7),$$
so that $\mathcal{O}_1\in\mathcal{H}(1,1,1,1)$ is a genus $3$ square-tiled surface. 

\begin{figure}[h!]
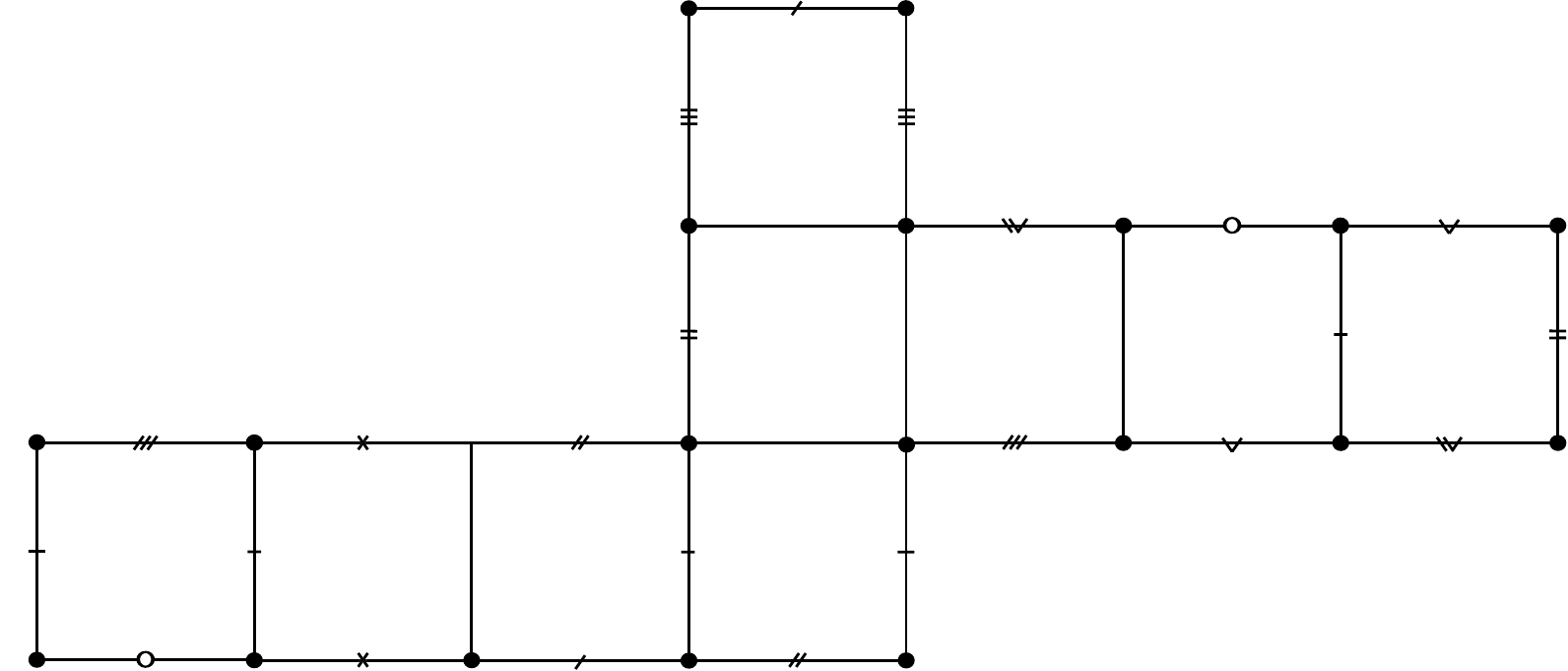\caption{Flat geometry of $\mathcal{O}_1$.}
\end{figure}

The $SL(2,\mathbb{Z})$-orbit of $\mathcal{O}_1$ consists of four elements. Indeed, this fact can be checked as follows. We recall that:
\begin{itemize} 
\item the generators $T=\left(\begin{array}{cc} 1 & 1 \\ 0 & 1 \end{array}\right)$ and $S=\left(\begin{array}{cc} 1 & 0 \\ 1 & 1 \end{array}\right)$ of $SL(2,\mathbb{Z})$ act on pairs of permutations $(h,v)$ by the rules $T(h,v) = (h,vh^{-1})$ and $S(h,v)=(hv^{-1}, v)$;
\item the pairs of permutations $(h,v)$ and $(\phi h \phi^{-1}, \phi v \phi^{-1})$ give rise to the same square-tiled surface. 
\end{itemize} 
Therefore, the $T$-orbit of $\mathcal{O}_1$ is $\{\mathcal{O}_1, \mathcal{O}_2, \mathcal{O}_3, \mathcal{O}_4\}$ where $\mathcal{O}_k:=T^k(\mathcal{O}_1)$ is given by the pair of permutations $(h_{\mathcal{O}_1}, v_{\mathcal{O}_k})$ with 
$$v_{\mathcal{O}_2} = (1,2,5,7)(3)(4,6,8,9), \quad v_{\mathcal{O}_3} = (1,2,7,8)(3,5,6,4)(9),$$   $$v_{\mathcal{O}_4} = (1,2,6,9)(3,7,4,5)(8)$$
As it turns out, the $T$-orbit of $\mathcal{O}_1$ accounts for its entire $SL(2,\mathbb{Z})$-orbit because 
$$S(\mathcal{O}_1)=(\phi_4^{-1} h_{\mathcal{O}_1} \phi_4, \phi_4^{-1} v_{\mathcal{O}_4} \phi_4)\simeq \mathcal{O}_4, \quad S^2(\mathcal{O}_1)=(\phi_3^{-1} h_{\mathcal{O}_1} \phi_3, \phi_3^{-1} v_{\mathcal{O}_3} \phi_3)\simeq \mathcal{O}_3,$$ 
$$S^3(\mathcal{O}_1)=(\phi_2^{-1} h_{\mathcal{O}_1} \phi_2, \phi_2^{-1} v_{\mathcal{O}_2} \phi_2)\simeq \mathcal{O}_2,$$
where 
$$\phi_4 = (1,6,2,9,4,3)(5,8)(7), \quad \phi_3 = (1,5,9,8)(2,6,3,4)(7)$$ 
and 
$$\phi_2 = (1,9)(2,4,5,3,6,8)(7).$$ 

\begin{remark}\label{r.-id} For later use, observe that the matrix $-\textrm{Id}$ acts on pairs of permutations by $-\textrm{Id}(h,v)=(h^{-1},v^{-1})$. In particular, the action of $-\textrm{Id}$ on $SL(2,\mathbb{Z})\cdot\mathcal{O}_1$ is completely described by the formulas 
$$-\textrm{Id}(\mathcal{O}_1) = (\psi_{3}^{-1} h_{\mathcal{O}_1} \psi_{3}, \psi_3^{-1} v_{\mathcal{O}_3} \psi_3)\simeq \mathcal{O}_3, \,\,\, -\textrm{Id}(\mathcal{O}_2) = (\psi_{4}^{-1} h_{\mathcal{O}_1} \psi_{4}, \psi_4^{-1} v_{\mathcal{O}_4} \psi_4)\simeq \mathcal{O}_4$$ 
where $\psi_3 := (1)(2,8,4,6)(3,7,5,9)$ and $\psi_4 := (1)(2,9,4,7)(3,8,5,6)$. 
\end{remark}

In summary, the $SL(2,\mathbb{Z})$-orbit of $\mathcal{O}_1$ can be depicted as in Figure \ref{f.2} below. 

\begin{figure}[h!]
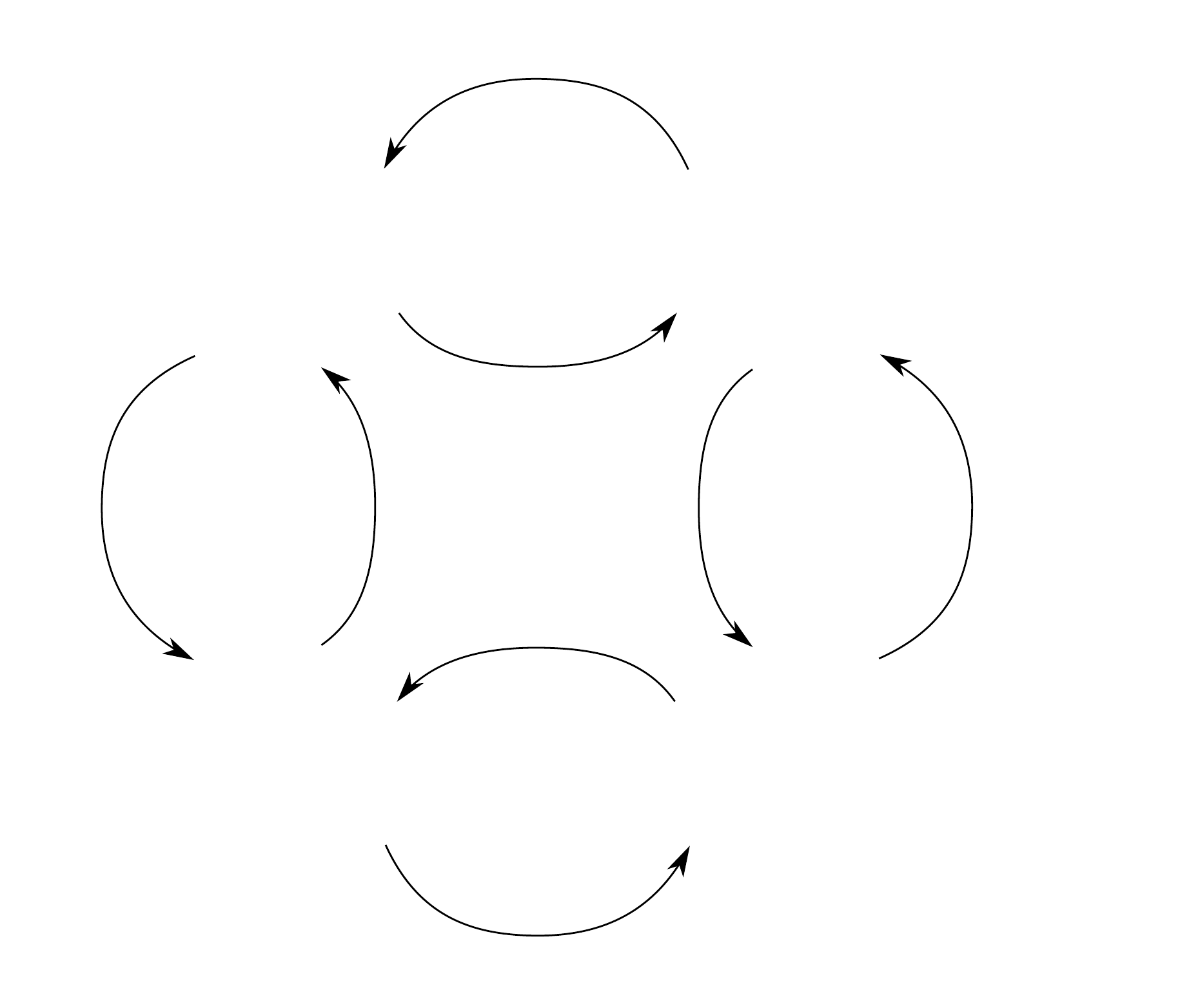\caption{$SL(2,\mathbb{Z})$-orbit of $\mathcal{O}_1$.}\label{f.2}
\end{figure}

It follows from this discussion that $SL(2,\mathbb{R})\cdot\mathcal{O}_1$ has a single cusp (i.e., single $T$-orbit in $SL(2,\mathbb{Z})\cdot\mathcal{O}_1$). 

\begin{remark}\label{r.homological-dim-O1} The homological dimension of $SL(2,\mathbb{R})\cdot\mathcal{O}_1$ in the sense of Forni \cite{Fo} is three. Thus, by the results in \cite{Fo}, the Lyapunov spectrum of the Kontsevich-Zorich cocycle over $SL(2,\mathbb{R})\cdot\mathcal{O}_1$ with respect to the Haar measure has the form 
$$1=\lambda_1>\lambda_2\geq\lambda_3>0>-\lambda_3\geq-\lambda_2>-\lambda_1=-1$$ 
Moreover, the Eskin-Kontsevich-Zorich formula \cite{EKZ} for the sum of non-negative Lyapunov exponents of the Kontsevich-Zorich cocycle implies that $1+\lambda_2+\lambda_3 = 2$, i.e., 
$$\lambda_2+\lambda_3=1$$
Finally, some \emph{numerical} experiments indicate that $\lambda_2\simeq 0.57...$ and $\lambda_3\simeq 0.43...$
\end{remark} 
 
\subsection{The stabilizer of $\mathcal{C}$} \label{Veech-group}

The group $\textrm{Aff}(\mathcal{O}_1)$ of affine homeomorphisms of $\mathcal{O}_1$ is the stabilizer of $\mathcal{C}:=SL(2,\mathbb{R})\cdot\mathcal{O}_1$ in the moduli space of translation surfaces. 

It is not hard to see that the subgroup $\textrm{Aut}(\mathcal{O}_1)\subset \textrm{Aff}(\mathcal{O}_1)$ of automorphisms of $\mathcal{O}_1$ is trivial. It follows that the elements of $\textrm{Aff}(\mathcal{O}_1)$ are determined by their linear parts in $SL(2,\mathbb{R})$, that is, the natural map 
$$\textrm{Aff}(\mathcal{O}_1)\to SL(2,\mathbb{R})$$
is injective. Hence, $\textrm{Aff}(\mathcal{O}_1)$ is isomorphic to its image $SL(\mathcal{O}_1)$ under this map. 

The group $SL(\mathcal{O}_1)$ is the finite-index subgroup of $SL(2,\mathbb{Z})$ consisting of all elements of $SL(2,\mathbb{R})$ stabilizing $\mathcal{O}_1$: in the literature, $SL(\mathcal{O}_1)$ is called the Veech group of $\mathcal{O}_1$.  

From Figure \ref{f.2} above, we see that $SL(\mathcal{O}_1)$ is an index four subgroup of $SL(2,\mathbb{Z})$. Furthermore, $SL(\mathcal{O}_1)$ is a congruence subgroup of level $4$, and the Teichm\"uller curve $\mathcal{C} =SL(2,\mathbb{R})/SL(\mathcal{O}_1)$ has genus zero. Thus, $SL(\mathcal{O}_1)$ is generated by elliptic and parabolic elements: indeed, one can check that $SL(\mathcal{O}_1)$ is generated by the following two elliptic matrices 
$$a:=\left(\begin{array}{cc} 0 & -1 \\ 1 & -1 \end{array}\right), \quad b:=\left(\begin{array}{cc} 1 & -3 \\ 1 & -2\end{array}\right)$$
of orders $3$. 

The group structure of $SL(\mathcal{O}_1)$ is provided by the following lemma: 

\begin{lemma}\label{l.Veech-group-structure} $SL(\mathcal{O}_1)$ is the free product 
$$SL(\mathcal{O}_1)=\langle a\rangle\ast\langle b\rangle \simeq \mathbb{Z}/3\mathbb{Z}\ast \mathbb{Z}/3\mathbb{Z}$$
\end{lemma}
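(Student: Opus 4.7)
The plan is to identify $\overline{SL(\mathcal{O}_1)}$ (the image of $SL(\mathcal{O}_1)$ in $PSL(2,\mathbb{Z})$) as a Fuchsian group of signature $(0;3,3;1)$, and then to recognize $a$ and $b$ as the two elliptic generators.

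First I would observe from Remark \ref{r.-id} that $-\textrm{Id}$ sends $\mathcal{O}_1$ to $\mathcal{O}_3\ne\mathcal{O}_1$, so $-\textrm{Id}\notin SL(\mathcal{O}_1)$ and the natural projection $SL(2,\mathbb{Z})\to PSL(2,\mathbb{Z})$ restricts to an injection on $SL(\mathcal{O}_1)$; a routine index computation yields $[PSL(2,\mathbb{Z}):\overline{SL(\mathcal{O}_1)}] = [SL(2,\mathbb{Z}):SL(\mathcal{O}_1)\cdot\{\pm\textrm{Id}\}] = 4/2 = 2$. Applying Riemann--Hurwitz to the degree-$2$ orbifold cover $\mathbb{H}/\overline{SL(\mathcal{O}_1)}\to\mathbb{H}/PSL(2,\mathbb{Z})$ gives orbifold Euler characteristic $\chi(\overline{SL(\mathcal{O}_1)}) = 2\cdot(-1/6) = -1/3$.

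Combined with the facts established in the preceding discussion — that $\mathcal{C}$ has genus zero and a single cusp — the orbifold formula $\chi = 2-2g-n-\sum_i(1-1/m_i)$ forces $\sum_i(1-1/m_i) = 4/3$, whose only solutions with $m_i\geq 2$ are $(m_1,m_2)=(3,3)$ and $(m_1,m_2)=(2,6)$. To rule out the second, I would use that $a$ and $b$ generate $SL(\mathcal{O}_1)$ and each has order $3$ (both matrices have trace $-1$ and determinant $1$, so each annihilates $X^2+X+1$): in $\mathbb{Z}/2\mathbb{Z}\ast\mathbb{Z}/6\mathbb{Z}$, every order-$3$ element is conjugate into the $\mathbb{Z}/6\mathbb{Z}$ factor, hence its image in the abelianization $\mathbb{Z}/2\mathbb{Z}\oplus\mathbb{Z}/6\mathbb{Z}$ lies in the proper subgroup $\{0\}\oplus\mathbb{Z}/6\mathbb{Z}$, contradicting the surjectivity of the abelianization map applied to $\langle a,b\rangle$. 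Hence the signature is $(0;3,3;1)$ and the standard presentation of such a Fuchsian group reduces (after eliminating the parabolic generator) to $\langle x_1,x_2\mid x_1^3,x_2^3\rangle\simeq\mathbb{Z}/3\mathbb{Z}\ast\mathbb{Z}/3\mathbb{Z}$.

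To conclude, the assignment $x_1\mapsto a$, $x_2\mapsto b$ defines a surjection $\mathbb{Z}/3\mathbb{Z}\ast\mathbb{Z}/3\mathbb{Z}\twoheadrightarrow SL(\mathcal{O}_1)$; since $SL(\mathcal{O}_1)$ is isomorphic (as an abstract group) to $\mathbb{Z}/3\mathbb{Z}\ast\mathbb{Z}/3\mathbb{Z}$, which is finitely generated and residually finite (being virtually free), hence Hopfian, this surjection must be an isomorphism. I expect the main technical hurdle to be ruling out the competing signature $(0;2,6;1)$; an appealing alternative that would avoid this dichotomy is to build an explicit hyperbolic fundamental domain for $\overline{SL(\mathcal{O}_1)}$ from the four cosets displayed in Figure \ref{f.2} and read off both the elliptic orders and the single cusp directly.
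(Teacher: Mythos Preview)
Your argument is correct, but it proceeds quite differently from the paper. The paper gives a direct ping-pong proof: it partitions $\mathbb{R}^2\setminus\{0\}$ into twelve explicit cones $C_1,\dots,C_{12}$ and checks that the nontrivial powers of $a$ send the table $X=C_1\cup C_2\cup C_7\cup C_8$ into the complementary table $Y$, while the nontrivial powers of $b$ send $Y$ into $X$. The free product structure then follows from the ping-pong lemma (in the version of Brav--Thomas), with no reference to Fuchsian signatures or the Hopfian property.

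Your route --- computing the orbifold Euler characteristic of $\mathbb{H}/\overline{SL(\mathcal{O}_1)}$, pinning down the signature, and then invoking Hopficity of $\mathbb{Z}/3\mathbb{Z}\ast\mathbb{Z}/3\mathbb{Z}$ --- is a legitimate alternative that leans on the surrounding facts (index $4$, genus $0$, single cusp) rather than explicit cone combinatorics. One simplification: the dichotomy $(3,3)$ versus $(2,6)$ can be resolved more cheaply by noting that any finite-order element of a subgroup of $PSL(2,\mathbb{Z})$ has order dividing $2$ or $3$, so order-$6$ cone points cannot occur; this makes the abelianization argument unnecessary. The trade-off is that the paper's ping-pong proof is entirely self-contained and simultaneously re-establishes that $a$ and $b$ generate $SL(\mathcal{O}_1)$, whereas your argument imports the Fuchsian presentation theory and the stated genus/cusp data. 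On the other hand, your approach would transport more readily to other Veech groups once their signatures are known.
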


\begin{proof} Consider the twelve cones $C_k\subset\mathbb{R}^2-\{(0,0)\}$ defined by the following properties:
\begin{itemize}
\item $C_{6+l} = -C_l$ for each $l=1,\dots, 6$;
\item each $C_l$, $l=1,\dots, 6$, consists of the convex combinations of positive multiples of the vectors $v_l$ and $v_{l+1}$, where $v_1:=(1,0)$, $v_2:=(2,1)$, $v_3:=(1,1)$, $v_4:=(1,2)$, $v_5:=(0,1)$, $v_6:=(-1,1)$ and $v_7:=(-1,0)$. 
\end{itemize} 

A simple calculation shows that 
\begin{itemize}
\item $a(v_l)=v_{l+4}$ for each $k=1,\dots, 6$;
\item $b(v_1)=v_3$, $b(v_2)=v_7$, $b(v_3)=-v_2$, $b(v_4)=(-5,-3)\in C_8$, $b(v_5)=(-3,-2)\in C_8$, $b(v_6)=(-4,-3)\in C_8$ and $b(v_7)=-v_3$. 
\end{itemize}

It follows that $\{a, a^2\}=\langle a\rangle\setminus\{\textrm{Id}\}$ and $\{b, b^2\}=\langle b\rangle \setminus \{\textrm{Id}\}$ play ping-pong with the tables 
$$X:=(C_1\cup C_2)\cup (C_7\cup C_8)$$
and 
$$Y:=C_3\cup C_4\cup C_5\cup C_6\cup C_9\cup C_{10}\cup C_{11}\cup C_{12}$$
in the sense that $X$ and $Y$ are disjoint subsets of $\mathbb{R}^2$ such that 
\begin{itemize}
\item $a(X)=(C_5\cup C_6)\cup(C_{11}\cup C_{12})\subset Y$, $a^2(X) = (C_9\cup C_{10})\cup (C_3\cup C_4)\subset Y$; 
\item $b(Y)\subset C_2\cup C_8\subset X$, $b^2(Y)\subset C_1\cup C_7\subset X$.
\end{itemize}

By the ping-pong lemma\footnote{Here, we are using the version of the ping-pong lemma stated as Theorem 2.1 in  Brav--Thomas article \cite{BT}.}, we conclude that $SL(\mathcal{O}_1) = \langle a\rangle\ast\langle b\rangle$.
\end{proof}

\begin{remark} The construction of these cones was inspired by Brav--Thomas paper \cite{BT}. 
\end{remark}

\section{The Kontsevich-Zorich monodromy of $\mathcal{C}$} \label{sec:KZmonodromy}

The representation $\alpha:\textrm{Aff}(\mathcal{O}_1)\to \textrm{Sp}(H_1(\mathcal{O}_1,\mathbb{Z}))$ is called Kontsevich-Zorich cocycle over $\mathcal{C}$. In the sequel, we will compute the image under $\alpha$ of the generators $a$ and $b$ of $SL(\mathcal{O}_1)\simeq\textrm{Aff}(\mathcal{O}_1)$. 

\subsection{The relative homology groups of $\mathcal{O}_k$, $k=1,\dots, 4$}  

Given $\mathcal{O}_k\in SL(2,\mathbb{Z})\cdot\mathcal{O}_1$, $k=1,\dots, 4$, let us denote by $\sigma_g^{(k)}$, resp., $\zeta_g^{(k)}$ the relative cycles on $\mathcal{O}_k$ consisting of the bottommost horizontal and leftmost vertical sides of the square numbered $g\in\{1,\dots,9\}$. 

Note that each square $g$ of $\mathcal{O}_k$ gives a relation $\sigma_g^{(k)} + \zeta_{h_{\mathcal{O}_1}(g)}^{(k)} = \zeta_g^{(k)} + \sigma_{v_{\mathcal{O}_k}(g)}^{(k)}$, that is,
\begin{itemize}
\item $\sigma_1^{(1)} = \sigma_2^{(1)}$, $\sigma_2^{(1)}+\zeta_3^{(1)} = \zeta_2^{(1)} + \sigma_3^{(1)}$, $\sigma_3^{(1)}+\zeta_4^{(1)} = \zeta_3^{(1)} + \sigma_6^{(1)}$, $\sigma_4^{(1)}+\zeta_5^{(1)} = \zeta_4^{(1)} + \sigma_7^{(1)}$, $\zeta_2^{(1)} = \zeta_5^{(1)}$, $\sigma_6^{(1)}+\zeta_7^{(1)} = \zeta_6^{(1)} + \sigma_1^{(1)}$, $\sigma_7^{(1)}+\zeta_8^{(1)} = \zeta_7^{(1)} + \sigma_9^{(1)}$, $\sigma_8^{(1)}+\zeta_9^{(1)} = \zeta_8^{(1)} + \sigma_4^{(1)}$, $\sigma_9^{(1)}+\zeta_6^{(1)} = \zeta_9^{(1)} + \sigma_8^{(1)}$;
\item $\sigma_1^{(2)} = \sigma_2^{(2)}$, $\sigma_2^{(2)}+\zeta_3^{(2)} = \zeta_2^{(2)} + \sigma_5^{(2)}$, $\zeta_3^{(2)} = \zeta_4^{(2)}$, $\sigma_4^{(2)}+\zeta_5^{(2)} = \zeta_4^{(2)} + \sigma_6^{(2)}$, $\sigma_5^{(2)}+\zeta_2^{(2)} = \zeta_5^{(2)} + \sigma_7^{(2)}$, $\sigma_6^{(2)}+\zeta_7^{(2)} = \zeta_6^{(2)}+ \sigma_8^{(2)}$, $\sigma_7^{(2)}+\zeta_8^{(2)} = \zeta_7^{(2)} + \sigma_1^{(2)}$, $\sigma_8^{(2)}+\zeta_9^{(2)} = \zeta_8^{(2)} + \sigma_9^{(2)}$, $\sigma_9^{(2)}+\zeta_6^{(2)} = \zeta_9^{(2)} + \sigma_4^{(2)}$;
\item $\sigma_1^{(3)} = \sigma_2^{(3)}$, $\sigma_2^{(3)}+\zeta_3^{(3)} = \zeta_2^{(3)} + \sigma_7^{(3)}$, $\sigma_3^{(3)}+\zeta_4^{(3)} = \zeta_3^{(3)} + \sigma_5^{(3)}$, $\sigma_4^{(3)}+\zeta_5^{(3)} = \zeta_4^{(3)} + \sigma_3^{(3)}$, $\sigma_5^{(3)}+\zeta_2^{(3)} = \zeta_5^{(3)} + \sigma_6^{(3)}$, $\sigma_6^{(3)}+\zeta_7^{(3)} = \zeta_6^{(3)} + \sigma_4^{(3)}$, $\sigma_7^{(3)}+\zeta_8^{(3)} = \zeta_7^{(3)} + \sigma_8^{(3)}$, $\sigma_8^{(3)}+\zeta_9^{(3)} = \zeta_8^{(3)} + \sigma_1^{(3)}$, $\zeta_6^{(3)} = \zeta_9^{(3)}$; 
\item $\sigma_1^{(4)} = \sigma_2^{(4)}$, $\sigma_2^{(4)}+\zeta_3^{(4)} = \zeta_2^{(4)} + \sigma_6^{(4)}$, $\sigma_3^{(4)}+\zeta_4^{(4)} = \zeta_3^{(4)} + \sigma_7^{(4)}$, $\sigma_4^{(4)}+\zeta_5^{(4)} = \zeta_4^{(4)} + \sigma_5^{(4)}$, $\sigma_5^{(4)}+\zeta_2^{(4)} = \zeta_5^{(4)} + \sigma_3^{(4)}$, $\sigma_6^{(4)}+\zeta_7^{(4)} = \zeta_6^{(4)} + \sigma_9^{(4)}$, $\sigma_7^{(4)}+\zeta_8^{(4)} = \zeta_7^{(4)} + \sigma_4^{(4)}$, $\zeta_8^{(4)} = \zeta_9^{(4)}$, $\sigma_9^{(4)}+\zeta_6^{(4)} = \zeta_9^{(4)} + \sigma_1^{(4)}$.
\end{itemize}

\subsection{The action of $SL(2,\mathbb{Z})$ on the relative homology groups}  

The matrix $T=\left(\begin{array}{cc} 1 & 1 \\ 0 & 1 \end{array}\right)$ takes $\mathcal{O}_k$ to $\mathcal{O}_{k+1}$, and it acts on the corresponding relative homology groups by the matrix $T_{k,k+1}$ such that 
$$T_{k,k+1}(\sigma_g^{(k)}) = \sigma_g^{(k+1)}, \quad T_{k,k+1}(\zeta_g^{(k)}) = \sigma_g^{(k+1)} + \zeta_{h_{\mathcal{O}_1}(g)}^{(k+1)},$$ 
Similarly, the matrix $S=\left(\begin{array}{cc} 1 & 0 \\ 1 & 1 \end{array}\right)$ takes $\mathcal{O}_{k}$ to $\mathcal{O}_{k-1}$, and it acts on the corresponding relative homology groups by the matrix $S_{k+1,k}$ such that 
$$S_{k+1,k}(\sigma_g^{(k+1)}) = \zeta_{\phi_k(g)}^{(k)} + \sigma_{v_{\mathcal{O}_k}(\phi_k(g))}^{(k)}, \quad S_{k+1,k}(\zeta_g^{(k+1)}) = \zeta_{\phi_k(g)}^{(k)}$$ 
Finally, $-\textrm{Id}$ exchange $\mathcal{O}_1$ and $\mathcal{O}_3$, resp. $\mathcal{O}_2$ and $\mathcal{O}_4$, and it acts on the corresponding relative homology groups by the matrices $(-\textrm{Id})_{1,3}=(-\textrm{Id})_{3,1}^{-1}$ and $(-\textrm{Id})_{2,4}=(-\textrm{Id})_{4,2}^{-1}$ such that 
$$(-\textrm{Id})_{1,3}(\sigma_g^{(1)}) = -\sigma_{v_{\mathcal{O}_3}(\psi_3(g))}^{(3)}, \quad (-\textrm{Id})_{1,3}(\zeta_g^{(1)}) = -\zeta_{h_{\mathcal{O}_3}(\psi_3(g))}^{(3)},$$
and 
$$(-\textrm{Id})_{2,4}(\sigma_g^{(2)}) = -\sigma_{v_{\mathcal{O}_4}(\psi_4(g))}^{(4)}, \quad (-\textrm{Id})_{2,4}(\zeta_g^{(2)}) = -\zeta_{h_{\mathcal{O}_4}(\psi_4(g))}^{(4)}$$

\subsection{The absolute homology groups of $\mathcal{O}_k$, $k=1,\dots, 4$} \label{ss.absolute-homology}  

The absolute homology group $H_1(\mathcal{O}_1,\mathbb{Q})$ has a basis $\mathcal{B}_k:=\{\Sigma_0^{(k)}, Z_0^{(k)}, \Sigma_1^{(k)}, \Sigma_2^{(k)}, Z_1^{(k)}, Z_2^{(k)}\}$ 
where 
$$\Sigma_0^{(k)} := \sum\limits_{g=1}^9 \sigma_g^{(k)}, \quad Z_0^{(k)} := \sum\limits_{g=1}^9 \zeta_g^{(k)},$$
$$\Sigma_1^{(k)} := \sum\limits_{j=1}^4\sigma_{h_{\mathcal{O}_1}^j(2)}^{(k)} - 4\sigma_1^{(k)}, \Sigma_2^{(k)} := \sum\limits_{j=1}^4\sigma_{h_{\mathcal{O}_1}^j(6)}^{(k)} - 4\sigma_1^{(k)},$$
and 
$$Z_1^{(1)} := \sum\limits_{j=1}^4 \zeta_{v_{\mathcal{O}_1}^j(1)}^{(1)} - 4\zeta_5^{(1)}, \quad Z_2^{(1)} := \sum\limits_{j=1}^4 \zeta_{v_{\mathcal{O}_1}^j(4)}^{(1)} - 4\zeta_5^{(1)},$$
$$Z_1^{(2)} := \sum\limits_{j=1}^4 \zeta_{v_{\mathcal{O}_1}^j(1)}^{(2)} - 4\zeta_3^{(2)}, \quad Z_2^{(2)} := \sum\limits_{j=1}^4 \zeta_{v_{\mathcal{O}_1}^j(4)}^{(1)} - 4\zeta_3^{(1)},$$
$$Z_1^{(3)} := \sum\limits_{j=1}^4 \zeta_{v_{\mathcal{O}_1}^j(1)}^{(1)} - 4\zeta_9^{(1)}, \quad Z_2^{(3)} := \sum\limits_{j=1}^4 \zeta_{v_{\mathcal{O}_1}^j(4)}^{(1)} - 4\zeta_9^{(1)},$$
$$Z_1^{(4)} := \sum\limits_{j=1}^4 \zeta_{v_{\mathcal{O}_1}^j(1)}^{(1)} - 4\zeta_8^{(1)}, \quad Z_2^{(4)} := \sum\limits_{j=1}^4 \zeta_{v_{\mathcal{O}_1}^j(4)}^{(1)} - 4\zeta_8^{(1)}.$$

Note that this basis is adapted to the decomposition $H_1(\mathcal{O}_k,\mathbb{Q}) = H_1^{st}(\mathcal{O}_k,\mathbb{Q}) \oplus H_1^{(0)}(\mathcal{O}_k,\mathbb{Q})$ in the sense that this decomposition corresponds to the partition $\mathcal{B}_k=\mathcal{B}_k^{st}\cup \mathcal{B}_k^{(0)}$ where $\mathcal{B}_k^{st}=\{\Sigma_0^{(k)}, Z_0^{(k)}\}$ and $\mathcal{B}_k^{(0)}=\mathcal{B}_k \setminus \mathcal{B}_k^{st}$, i.e., 
$$H_1^{st}(\mathcal{O}_k,\mathbb{Q}) = \mathbb{Q}\Sigma_0^{(k)}\oplus\mathbb{Q} Z_0^{(k)}$$
and 
$$H_1^{(0)}(\mathcal{O}_k,\mathbb{Q}) = \mathbb{Q}\Sigma_1^{(k)}\oplus\mathbb{Q} Z_1^{(k)}\oplus \mathbb{Q}\Sigma_2^{(k)}\oplus\mathbb{Q} Z_2^{(k)}.$$

Moreover, it is worth to point out that the matrix of the restriction to $H_1^{(0)}(\mathcal{O}_1,\mathbb{Z})$ of the intersection form $\Omega$ in the basis $\mathcal{B}_1^{(0)}$ is 
$$\Omega = \left(\begin{array}{cccc} 0 & 0 & -6 & -3 \\ 0 & 0 & -3 & 3 \\ 6 & 3 & 0 & 0 \\ 3 & -3 & 0 & 0 \end{array}\right)$$

\subsection{The action of $\textrm{Aff}(\mathcal{O}_1)$ on the absolute homology group}  

The formulas from the previous two subsections say that the matrices of $T_{k,k+1}$, $S_{k+1,k}$ and $-(\textrm{Id})_{k,k+2}$ with respect to the bases $\mathcal{B}_l$ are 
$$T_{1,2}=\left(\begin{array}{cccccc} 1 & 1 & 0 & 0 & 0 & 0 \\ 0 & 1 & 0 & 0 & 0 & 0 \\ 0 & 0 & 1 & 0 & 1 & 0 \\ 0 & 0 & 0 & 1 & 0 & 1 \\ 0 & 0 & 0 & 0 & 1 & 0 \\ 0 & 0 & 0 & 0 & 0 & 1\end{array}\right), \quad  T_{2,3}=\left(\begin{array}{cccccc} 1 & 1 & 0 & 0 & 0 & 0 \\ 0 & 1 & 0 & 0 & 0 & 0 \\ 0 & 0 & 1 & 0 & -1 & -1 \\ 0 & 0 & 0 & 1 & 0 & 1 \\ 0 & 0 & 0 & 0 & 1 & 0 \\ 0 & 0 & 0 & 0 & -1 & -1\end{array}\right),$$ 
$$T_{3,4}=\left(\begin{array}{cccccc} 1 & 1 & 0 & 0 & 0 & 0 \\ 0 & 1 & 0 & 0 & 0 & 0 \\ 0 & 0 & 1 & 0 & 0 & 1 \\ 0 & 0 & 0 & 1 & 1 & 0 \\ 0 & 0 & 0 & 0 & 1 & 0 \\ 0 & 0 & 0 & 0 & 0 & 1\end{array}\right), \quad  T_{4,1}=\left(\begin{array}{cccccc} 1 & 1 & 0 & 0 & 0 & 0 \\ 0 & 1 & 0 & 0 & 0 & 0 \\ 0 & 0 & 1 & 0 & 0 & 1 \\ 0 & 0 & 0 & 1 & -1 & -1 \\ 0 & 0 & 0 & 0 & 1 & 0 \\ 0 & 0 & 0 & 0 & -1 & -1\end{array}\right),$$ 
$$S_{1,4}=\left(\begin{array}{cccccc} 1 & 0 & 0 & 0 & 0 & 0 \\ 1 & 1 & 0 & 0 & 0 & 0 \\ 0 & 0 & 0 & 1 & 0 & 0 \\ 0 & 0 & 1 & 0 & 0 & 0 \\ 0 & 0 & 1 & 0 & 1 & 0 \\ 0 & 0 & 0 & 1 & 0 & 1\end{array}\right), \quad S_{4,3}=\left(\begin{array}{cccccc} 1 & 0 & 0 & 0 & 0 & 0 \\ 1 & 1 & 0 & 0 & 0 & 0 \\ 0 & 0 & -1 & -1 & 0 & 0 \\ 0 & 0 & 0 & 1 & 0 & 0 \\ 0 & 0 & 1 & 0 & 0 & 1 \\ 0 & 0 & -1 & -1 & 1 & 0\end{array}\right),$$ 
$$S_{3,2}=\left(\begin{array}{cccccc} 1 & 0 & 0 & 0 & 0 & 0 \\ 1 & 1 & 0 & 0 & 0 & 0 \\ 0 & 0 & 0 & 1 & 0 & 0 \\ 0 & 0 & 1 & 0 & 0 & 0 \\ 0 & 0 & 0 & 1 & 1 & 0 \\ 0 & 0 & 1 & 0 & 0 & 1\end{array}\right), \quad S_{2,1}=\left(\begin{array}{cccccc} 1 & 0 & 0 & 0 & 0 & 0 \\ 1 & 1 & 0 & 0 & 0 & 0 \\ 0 & 0 & 1 & 0 & 0 & 0 \\ 0 & 0 & -1 & -1 & 0 & 0 \\ 0 & 0 & 0 & 1 & 0 & 1 \\ 0 & 0 & -1 & -1 & 1 & 0\end{array}\right),$$ 
$$(-\textrm{Id})_{1,3} = \left(\begin{array}{cccccc} -1 & 0 & 0 & 0 & 0 & 0 \\ 0 & -1 & 0 & 0 & 0 & 0 \\ 0 & 0 & 0 & -1 & 0 & 0 \\ 0 & 0 & -1 & 0 & 0 & 0 \\ 0 & 0 & 0 & 0 & -1 & 0 \\ 0 & 0 & 0 & 0 & 0 & -1\end{array}\right) = (-\textrm{Id})_{2,4}$$

This allows us to compute the images $\alpha(a)$ and $\alpha(b)$ of the generators $a$ and $b$ of $SL(\mathcal{O}_1)\simeq\textrm{Aff}(\mathcal{O}_1)$ under the KZ cocycle $\alpha:\textrm{Aff}(\mathcal{O}_1)\to \textrm{Sp}(H_1(\mathcal{O}_1,\mathbb{Z}))$. Indeed, 
$$a=\left(\begin{array}{cc} 0 & -1 \\ 1 & -1 \end{array}\right) = (-\textrm{Id}) T S^{-1}, \quad b=\left(\begin{array}{cc} 1 & -3 \\ 1 & -2\end{array}\right) = ST^{-3},$$
so that 
$$\alpha(a) = (-\textrm{Id})_{3,1} T_{2,3} S_{2,1}^{-1}, \quad \alpha(b) = S_{2,1}T_{2,3}^{-1}T_{3,4}^{-1}T_{4,1}^{-1}$$

For later use, we observe that these formulas give that the non-tautological subrepresentation $\rho:\textrm{Aff}(\mathcal{O}_1)\to \textrm{Sp}(H_1^{(0)}(\mathcal{O}_1,\mathbb{Z}))$ of $\alpha$ takes values 
$$\rho(a) = \left(\begin{array}{cccc} 0 & 0 & -1 & 0 \\ 0 & 0 & 1 & 1 \\ 
 0 & 1 & 0 & -1 \\ 1 & 0 & 1 & 1 \end{array}\right), \quad \rho(b) = \left(\begin{array}{cccc} 
1 & 0 & 3 & 3 \\ -1 & -1 & -2 & -1 \\ 0 & 1 & -1 & -1 \\ -1 & -1 & -1 & -1\end{array}\right)$$
(with respect to the basis $\mathcal{B}_1^{(0)}$ of $H_1^{(0)}(\mathcal{O}_1,\mathbb{Z})$) at the two generators $a$ and $b$ of $SL(\mathcal{O}_1)$. Moreover, if we denote by $p_1 = ST^{-4}S^{-1}T^4$, $p_2 = ST^{-4}ST^6\in SL(\mathcal{O}_1)$, then the characteristic polynomials $\chi_{p_1}(x)$ and $\chi_{p_2}(x)$ of the matrices $\rho(p_1)$ and $\rho(p_2)$ are 
$$\chi_{p_1}(x) = x^4 - 11x^3 + 29x^2 - 11x + 1$$ 
and 
$$\chi_{p_2}(x) = x^4 -2x^3 -16x^2 -2x +1$$

\section{Arithmeticity of the Kontsevich-Zorich group associated to $\mathcal{C}$} \label{sec:arithKZ}

This section is devoted to the study of the image of the representation $\rho:\textrm{Aff}(\mathcal{O}_1)\to \textrm{Sp}(H_1^{(0)}(\mathcal{O}_1,\mathbb{Z}))$ describing the non-tautological part of the Kontsevich--Zorich cocycle. 

\subsection{Zariski density of $\rho(\text{Aff}(\mathcal{O}_1))$ in $\text{Sp}(H_1^{(0)}(\mathcal{O}_1,\mathbb{R}))$}\label{ss.Zariski}   

The matrices $\rho(p_1)$ and $\rho(p_2)$ are Galois-pinching\footnote{Recall that a matrix $A\in Sp(2d,\mathbb{Z})$ is Galois-pinching whenever its eigenvalues are real and its characteristic polynomial is an irreducible polynomial over $\mathbb{Q}$ with largest possible Galois group (of order $2^d d!$).} in the sense of the article \cite{MMY} and the splitting fields of their characteristic polynomials are disjoint. 

Indeed, these facts follow from the analysis of the discriminants 
$$\Delta_1(\chi_{p_1}) = (-11)^2 - 4\times (29-2) = 13, \quad \Delta_1(\chi_{p_2}) = (-2)^2 - 4\times (-16-2) = 2^2\times 19$$
and 
$$\Delta_2(\chi_{p_1}) = (29+2)^2 - 4\times (-11)^2 = 3^2\times 53, \,\, \Delta_2(\chi_{p_2}) = (-16+2)^2 - 4\times(-2)^2 = 6^2\times 5$$ 
(cf. \cite[\S 6.7]{MMY}). 

By the Zariski density criterion of Prasad--Rapinchuk \cite[Theorem 9.10]{PR} (see also \cite[Theorem 1.5]{Ri}), we have that $\rho(\textrm{Aff}(\mathcal{O}_1))$ is Zariski-dense in $\textrm{Sp}(H_1^{(0)}(\mathcal{O}_1,\mathbb{R}))$. 

\begin{remark} The Zariski-denseness of $\rho(\textrm{Aff}(\mathcal{O}_1))$ allows to apply the main result of \cite{EsMat} in order to deduce that the Lyapunov spectrum of $\mathcal{C}$ is simple, i.e., 
$$1=\lambda_1>\lambda_2>\lambda_3>-\lambda_3>-\lambda_2>-\lambda_1=-1$$ 
\end{remark}

\subsection{Arithmeticity of $\rho(SL(\mathcal{O}_1))$ in $Sp(H_1^{(0)}(\mathcal{O}_1,\mathbb{R}))$}   

Denote by 
$$\Theta = \left(\begin{array}{cccc} 
1 & 1 & 1 & -1 \\ -1 & 0 & 0 & 1 \\ -1 & -1 & 0 & -1 \\ 0 & 1 & -1 & 1 
\end{array}\right)$$

After using $\Theta$ to change the basis $\mathcal{B}_1^{(0)}$, we obtain the matrices 
$$A:=\Theta^{-1}\rho(a)\Theta = \left(\begin{array}{cccc} 
 1 & 0 & 0 & 0 \\ 0 & 1 & 0 & 0 \\ 0 & 0 & -1 & 1 \\ 0 & 0 & -1 & 0
\end{array}\right), 
\quad 
B:=\Theta^{-1}\rho(b)\Theta = \left(\begin{array}{cccc}
 -1 & 0 & 0 & -1 \\ 
 0 & 0 & -1 & 0 \\ 
 0 & 1 & -1 & 0 \\ 
 1 & 0 & 0 & 0 
\end{array}\right)$$

\begin{remark} Since the matrices $\rho(a)$ and $\rho(b)$ preserve the symplectic form induced by $\Omega$ in Subsection \ref{ss.absolute-homology} above, we have that $\Theta^{-1}\rho(a)\Theta$ and $\Theta^{-1}\rho(b)\Theta$ are symplectic matrices with respect to 
$$\Theta^t\Omega\Theta = \left( 
\begin{array}{cccc} 
0 &-9 & 0 & 0 \\ 9 & 0 & 0 & 0 \\ 0 & 0 & 0 & 9 \\ 0 & 0 & -9 & 0 
\end{array} \right)$$
\end{remark}

At this point, the proof of Theorem \ref{t.HM} is reduced to: 

\begin{theorem}\label{t.HM'} $\rho(\textrm{Aff}(\mathcal{O}_1))$ has finite index in $\textrm{Sp}(H_1^{(0)}(\mathcal{O}_1),\mathbb{Z})$.
\end{theorem}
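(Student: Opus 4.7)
The plan is to invoke the arithmeticity criterion of Benoist and Miquel \cite{BM}, which asserts that a Zariski-dense discrete subgroup of a connected semisimple real algebraic Lie group (each of whose simple factors has real rank at least two) is arithmetic as soon as it contains a lattice of the unipotent radical of a proper parabolic subgroup. Since $\textrm{Sp}(H_1^{(0)}(\mathcal{O}_1,\mathbb{R}))\simeq\textrm{Sp}(4,\mathbb{R})$ is simple of real rank two, this criterion is applicable here. Moreover, $\rho(\textrm{Aff}(\mathcal{O}_1))\subset\textrm{Sp}(4,\mathbb{Z})$ is automatically discrete, and its Zariski density in $\textrm{Sp}(4,\mathbb{R})$ has already been established in Subsection \ref{ss.Zariski}. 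The task therefore reduces to producing a horospherical lattice inside $\rho(\textrm{Aff}(\mathcal{O}_1))$.

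The natural source of unipotent elements is the parabolic part of the Veech group $SL(\mathcal{O}_1)$. Since the $SL(2,\mathbb{Z})$-orbit of $\mathcal{O}_1$ consists of a single $T$-orbit of length four, one has $T^4\in SL(\mathcal{O}_1)$, and $T^4$ acts on $\mathcal{O}_1$ as a simultaneous Dehn twist along its three horizontal cylinders. Consequently, $\rho(T^4)$ is a non-trivial unipotent matrix. A short $2\times 2$ computation yields $ab=-T^{-2}$, so $T^{-4}=(ab)^2$ and, using $a^3=b^3=\textrm{Id}$, $T^4=(b^2a^2)^2$. Therefore $\rho(T^4)=(\rho(b)^2\rho(a)^2)^2$ is directly computable from the matrices $A$ and $B$ of Section \ref{sec:arithKZ}.

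Additional unipotent elements are then produced by conjugating $\rho(T^4)$ by short words $\rho(w)$ in $\rho(a),\rho(b)$. The key step is to extract from this family a finite collection of unipotents that together generate a lattice in the unipotent radical $U$ of some proper parabolic subgroup of $\textrm{Sp}(4)$. Two natural options present themselves: the abelian, three-dimensional unipotent radical of the Siegel parabolic, for which one requires the chosen unipotents to pairwise commute; or one of the non-abelian unipotent radicals (of the Klingen or Borel parabolic), which only impose weaker compatibility conditions. In each case the verification is a finite linear-algebraic check on $4\times 4$ matrices: compute the candidate unipotent words in $A,B$, take their logarithms in the relevant nilpotent Lie algebra, and confirm that the resulting $\mathbb{Z}$-span has full rank inside the corresponding integer lattice.

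The principal obstacle lies precisely in this last verification: one must identify, by hand or by short computation, a collection of words $w$ in $a,b$ so that the conjugates $\rho(w)\rho(T^4)\rho(w)^{-1}$ all lie in a single unipotent radical $U(\mathbb{R})$ and together span a finite-index subgroup of $U(\mathbb{Z})$. This matches the authors' remark that the argument reduces to ``some computations with certain powers of the two $4\times 4$ matrices'' described earlier; once the horospherical lattice is in hand, Benoist--Miquel yields Theorem \ref{t.HM'} (and hence Theorem \ref{t.HM}) immediately.
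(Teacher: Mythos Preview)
Your overall strategy coincides with the paper's: invoke the Oh/Benoist--Miquel criterion, use the Zariski density already established in Subsection~\ref{ss.Zariski}, and reduce the problem to exhibiting a horospherical lattice inside $\rho(\textrm{Aff}(\mathcal{O}_1))$. However, your proposal is not a proof but a plan: you explicitly leave the ``principal obstacle''---producing the actual words in $A,B$ whose images generate a finite-index subgroup of some $U(\mathbb{Z})$---unresolved. That step is the entire content of the argument, and it does not follow formally from anything you have written; without it, nothing has been proved.

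Concretely, the paper does \emph{not} proceed by conjugating $\rho(T^4)$. Instead, after a further conjugation by a permutation matrix $P$, the authors ran a Sage search for words of length $\leq 10$ in $A,B,A^2,B^2$ fixing the first basis vector, and found three elements $x=P(A^2B)^2(AB^2)^2P$, $y=PABA^2BA(AB^2)^2P$, $z=PA^2BA^2(B^2A)^2BP$. The four specific combinations $[y,x]$, $x^6[y,x]$, $y^6[y,x]^{-1}$, $z^6(x^6[y,x])^{-1}$ are then checked by direct computation to be the elementary unipotents with a single off-diagonal entry $\pm 18$, hence they generate a finite-index subgroup of the unipotent upper-triangular matrices in $Sp(4,\mathbb{Z})$. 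Your idea of starting from the Dehn multitwist $\rho(T^4)$ is natural, but there is no a priori reason its conjugates by short words should all land in a common unipotent radical, let alone span it; if you pursue this route you must actually perform the search and exhibit the words, as the paper does.
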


\begin{proof}
Let us consider the permutation matrix
$$P =  \left( 
\begin{array}{cccc} 
1 & 0 & 0 & 0 \\ 0 & 0 & 0 & 1 \\ 0 & 0 & 1 & 0 \\ 0 & 1 & 0 & 0 
\end{array} \right)$$ 
exchanging the second and fourth basis vectors and let us show that the conjugate 
$$P\cdot\langle A,B\rangle\cdot P$$ of $\rho(\textrm{Aff}(\mathcal{O}_1))=\langle A,B\rangle$ is arithmetic, i.e., it has finite-index in $Sp(4,\mathbb{Z})$. 

We found\footnote{For this sake, we asked Sage to look words on $A$, $B$, $A^2$ and $B^2$ of size $\leq 10$ fixing the first basis vector.} that the matrices $x=P(A^2B)^2(AB^2)^2P$, $y=PABA^2BA(AB^2)^2P$ and $z=PA^2BA^2(B^2A)^2BP$ are interesting because
$$[y,x]=yxy^{-1}x^{-1} = \left( 
\begin{array}{cccc} 
1 & 0 & 0 & 18 \\ 0 & 1 & 0 & 0 \\ 0 & 0 & 1 & 0 \\ 0 & 0 & 0 & 1 
\end{array} \right), \quad 
x^6[y,x] = \left( 
\begin{array}{cccc} 
1 & 0 & 18 & 0 \\ 0 & 1 & 0 & 18 \\ 0 & 0 & 1 & 0 \\ 0 & 0 & 0 & 1 
\end{array} \right),$$ 
$$y^6[y,x]^{-1} = \left( 
\begin{array}{cccc} 
1 & 18 & 0 & 0 \\ 0 & 1 & 0 & 0 \\ 0 & 0 & 1 & -18 \\ 0 & 0 & 0 & 1 
\end{array} \right), \quad 
z^6(x^6[y,x])^{-1} = \left( 
\begin{array}{cccc} 
1 & 0 & 0 & 0 \\ 0 & 1 & -18 & 0 \\ 0 & 0 & 1 & 0 \\ 0 & 0 & 0 & 1 
\end{array} \right)$$
generate the positive root groups of $Sp(4,\mathbb{R})$ and, thus, $P\cdot \langle A,B\rangle\cdot P$ intersects the subgroup $U(\mathbb{Z})$ of unipotent upper triangular matrices of $Sp(4,\mathbb{Z})$ in a finite-index subgroup\footnote{This argument was inspired by Section 2 of Singh and Venkataramana paper \cite{SV}. Note that if we want to generate a finite-index subgroup of the unipotent \emph{radical} of the parabolic subgroup associated to the flag $\mathbb{Q}e_1\subset \mathbb{Q}e_1\oplus \mathbb{Q}e_2\oplus \mathbb{Q}e_3\subset \mathbb{Q}^4$, then it suffices to use the matrices $[y,x]$, $x^6[y,x]$ and $y^6[y,x]^{-1}$.}.

Since we know that $\langle A,B\rangle$ is Zariski-dense, we can apply the arithmeticity criterion of Oh \cite{Oh} and Benoist--Miquel \cite{BM} saying that Zariski dense subgroups of $Sp(4,\mathbb{Z})$ containing a finite-index subgroup of $U(\mathbb{Z})$ are arithmetic to get the desired conclusion.
\end{proof}

\begin{remark}\label{r.Kazhdan} S. Kohl pointed out\footnote{Actually he computed with GAP the words on $A, B, A^{-1}, B^{-1}$ of sizes $1, 2, \dots$, and he noticed that the set of words of length $12$ has size $<2^{12+1}$. This led him to the nontrivial relation of length $2\cdot12=24$ above.} to us that $\rho$ is not faithful: indeed, 
$$(ABA^{-1}BA^{-1}BAB^{-1})^3 = \textrm{Id},$$ 
so that $(aba^{-1}ba^{-1}bab^{-1})^3 = \left( 
\begin{array}{cc} -24587 & 42408 \\ 15048 & -25955 \end{array} \right)$ lies in $\textrm{ker}(\rho)$. 

This is coherent with the arithmeticity statement in Theorem \ref{t.HM'}: if $\rho$ were faithful, then $\textrm{Sp}(H_1^{(0)}(\mathcal{O}_1),\mathbb{Z})$ would contain a finite-index subgroup isomorphic to a free group\footnote{Alternatively, Lemma \ref{l.Veech-group-structure} could be directly used to show that $\rho(\textrm{Aff}(\mathcal{O}_1))$ would contain a finite-index subgroup isomorphic to a free group if $\rho$ were faithful.} on five generators, namely $\rho(\Gamma(4))\subset \rho(\textrm{Aff}(\mathcal{O}_1))$. This is a contradiction because it is well-known that $\textrm{Sp}(4,\mathbb{Z})$ does not contain lattices isomorphic to free groups (thanks to Kazhdan property (T)).
\end{remark}

\subsection{Final comments} 

This note grew up from the following attempt to produce examples of origamis generating thin Kontsevich--Zorich monodromies. 

By an argument in the spirit of Remark \ref{r.Kazhdan}, if $\mathcal{O}$ is an origami of genus $g\geq 3$ such that the representation $\rho:\textrm{Aff}(\mathcal{O})\to \textrm{Sp}(H_1^{(0)}(\mathcal{O},\mathbb{Z}))$ is faithful and $\rho(\textrm{Aff}(\mathcal{O}))$ is Zariski-dense in $\textrm{Sp}(H_1^{(0)}(\mathcal{O},\mathbb{R}))$, then $\mathcal{O}$ has thin Kontsevich--Zorich monodromy. On the other hand, if $\mathcal{O}$ has some direction with homological dimension one (i.e., whose cylinders have waist curves spanning a one-dimensional subspace of $H_1(\mathcal{O},\mathbb{R})$), then it is not hard to check that a Dehn multitwist along this direction would belong to the kernel of $\rho$. Hence, it is natural to try to detect origamis with thin Kontsevich--Zorich monodromies among the origamis without directions of homological dimension one. 

\begin{remark} A related strategy towards the same goal would be to show that $\rho(\textrm{Aff}(\mathcal{O}))$ fits the assumptions of the ping-pong lemma (compare with the proof of Lemma \ref{l.Veech-group-structure}). Nevertheless, it is not easy to implement this idea in general because the construction of ``ping-pong subsets'' might be somewhat tricky (see page 5387 and also Subsections 3.2 and 3.3 of Fuchs--Rivin paper \cite{FR}). 
\end{remark}

As it turns out, the origami $\mathcal{O}_1$ is one of the smallest examples of origamis of genus $3$ having no direction with homological dimension one (compare with Remark \ref{r.homological-dim-O1}) and this explains our interest on its Kontsevich--Zorich monodromy. 

Anyhow, once we detect a good candidate origami $\mathcal{O}$, the first step is the computation of its Kontsevich--Zorich monodromy, i.e., the Zariski closure of $\rho(\textrm{Aff}(\mathcal{O}))$ (compare with Subsection \ref{ss.Zariski}). Here, the criterion of Prasad--Rapinchuk \cite[Theorem 9.10]{PR} (see also \cite[Theorem 1.5]{Ri}) informally says that the Zariski closure is ``often'' a symplectic group $Sp$ or a product of $SL_2$'s. Moreover, the techniques in \cite{MMY} indicate that the Zariski closure tends to be a symplectic group in many situations including $\mathcal{H}(4)$, but this must be taken with a grain of salt because the case of products of $SL_2$ happens in nature: for instance, Eskin--Kontsevich--Zorich \cite{EKZ2} noted that the so-called ``stairs'' origamis in $\mathcal{H}(2g-2)$ and $\mathcal{H}(g-1,g-1)$ are covered by special ``square-tiled cyclic covers'' and this information can be used to show that the Kontsevich--Zorich monodromy of a ``stairs'' origami is contained\footnote{Actually, we did some computations with the \emph{first few} stairs origamis and their Kontsevich--Zorich monodromies turned out to be \emph{equal} to products of $SL_2$.} in a product of $SL_2$'s. 

Finally, even if $\rho(\textrm{Aff}(\mathcal{O}))$ is Zariski-dense in $\textrm{Sp}(H_1^{(0)}(\mathcal{O},\mathbb{R}))$, it is certainly a challenging problem to obtain the faithfulness of $\rho$. Here, the case of arithmetic Teichm\"uller curves of genus zero might be a good starting point of investigation (because the corresponding Veech groups are generated by elliptic and parabolic elements of $SL(2,\mathbb{Z})$), but our discussion of $\mathcal{O}_1$ in the previous subsection shows that this situation is not always favourable towards the construction of thin Kontsevich--Zorich monodromies. 



\begin{thebibliography}{ABCD99}

\bibitem[AMY]{AMY} A. Avila, C. Matheus and J.-C. Yoccoz, {\em Zorich conjecture for hyperelliptic Rauzy-Veech groups}, Math. Ann. 370 (2018), 785--809. 

\bibitem[BM]{BM} Y. Benoist and S. Miquel, {\em Arithmeticity of discrete subgroups containing horospherical lattices}, preprint (2018) available at arXiv:1805.00045.

\bibitem[BT]{BT} C. Brav and H. Thomas, {\em Thin monodromy in Sp(4)}, Compos. Math. 150 (2014), 333--343. 

\bibitem[EKZ]{EKZ} A. Eskin, M. Kontsevich and A. Zorich, {\em Sum of Lyapunov exponents of the Hodge bundle with respect to the Teichm\"uller geodesic flow}, Publ. Math. Inst. Hautes \'Etudes Sci. 120 (2014), 207--333.

\bibitem[EKZ2]{EKZ2} A. Eskin, M. Kontsevich and A. Zorich, \emph{Lyapunov spectrum of square-tiled cyclic covers}, J. Mod. Dyn. 5 (2011), no. 2, 319--353.

\bibitem[EM]{EsMat} A. Eskin and C. Matheus, {\em A coding-free simplicity criterion for the Lyapunov exponents of \Teichmuller curves}, Geom. Dedicata 179 (2015), 45--67. 

\bibitem[Fo]{Fo} G. Forni, {\em A geometric criterion for the nonuniform hyperbolicity of the Kontsevich-Zorich cocycle}, with an appendix by Carlos Matheus. J. Mod. Dyn. 5 (2011), 355--395. 

\bibitem[FM]{FM} G. Forni and C. Matheus, {\em Introduction to Teichm\"uller theory and its applications to dynamics of interval exchange transformations, flows on surfaces and billiards}, J. Mod. Dyn. 8 (2014), 271--436. 

\bibitem[FR]{FR} E. Fuchs and I. Rivin, \emph{Generic thinness in finitely generated subgroups of $SL_n(\mathbb{Z})$}, Int. Math. Res. Not. IMRN 2017, no. 17, 5385--5414.

\bibitem[Gu]{Gu} R. Guti\'errez-Romo, {\em Classification of Rauzy-Veech groups: proof of the Zorich conjecture},  Invent. Math. (2018), https://doi.org/10.1007/s00222-018-0836-7.

\bibitem[MMY]{MMY} C. Matheus, M. M\"oller and J.-C. Yoccoz, {\em A criterion for the simplicity of the Lyapunov spectrum of square-tiled surfaces}, Invent. Math. 202 (2015), 333--425.

\bibitem[Oh]{Oh} H. Oh, On discrete subgroups containing a lattice in a horospherical subgroup,  
Israel J. Math. 110 (1999), 333--340.

\bibitem[PR]{PR} G. Prasad and A. Rapinchuk, {\em Generic elements in Zariski-dense subgroups and isospectral locally symmetric spaces}, Thin groups and superstrong approximation, 211--252, Math. Sci. Res. Inst. Publ., 61, Cambridge Univ. Press, Cambridge, 2014.

\bibitem[Ri]{Ri} I. Rivin, {\em Large Galois groups with applications to Zariski density}, preprint (2015) available at arXiv:1312.3009. 

\bibitem[Sa]{Sa} P. Sarnak, {\em Notes on thin matrix groups}, Thin groups and superstrong approximation, 343--362, Math. Sci. Res. Inst. Publ., 61, Cambridge Univ. Press, Cambridge, 2014.

\bibitem[SV]{SV} S. Singh and T. N. Venkataramana, {\em Arithmeticity of certain symplectic hypergeometric groups}, Duke Math. J. 163 (2014), 591--617.

\end{thebibliography}
\end{document}